\newtheorem{theorem}{Theorem}
\newtheorem{conjecture}[theorem]{Conjecture}
\newtheorem{corollary}[theorem]{Corollary}
\newtheorem{definition}[theorem]{Definition}
\newtheorem{lemma}[theorem]{Lemma}
\newtheorem{remark}[theorem]{Remark}
\newenvironment{proof}[1][Proof]{\noindent\textbf{#1.} }{\ \rule{0.5em}{0.5em}}
\begin{document}

\title{\vspace{-10mm} Intrinsic Supermoothness}
\author{
 Boris Shekhtman\footnote{University of South Florida, 4202 East Fowler Ave, CMC342, Tampa, FL 33620, shekhtma@usf.edu}\\
 Tatyana Sorokina\footnote{Towson University, 7800 York Road, Towson, MD 21252, tsorokina@towson.edu}}
 \date{}
\maketitle

\begin{abstract}
The phenomenon, known as``supersmoothness" was first
observed for bivariate splines and attributed to the polynomial nature of
splines. Using only standard tools from multivatiate calculus, we show that
if we continuously glue two smooth functions along a curve with a
``corner", the resulting continuous function must be
differentiable at the corner, as if to compensate for the singularity of the
curve. Moreover, locally, this property, we call supersmoothness,
characterizes non-smooth curves. We also generalize this phenomenon to
higher order derivatives. In particular, this shows that supersmoothness has
little to do with properties of polynomials.
\end{abstract}

\vskip5pt \noindent \textit{AMS classification:} Primary 26B05, Secondary
26B35 
\vskip5pt \noindent \textit{Key words and phrases:} supersmoothness,
piecewise bivariate function, polynomial splines, smooth curve 
\vskip10pt

\section{Introduction}

In this short article we address supersmoothness: a phenomenon where under
certain circumstances continuity of a function of two variables implies its
differentiability at a point or, consequently, differentiability of a
bivariate function implies its higher order differentiability at a point.
Supersmoothness was first observed for a particular class of piecewise
bivariate polynomial functions, called splines, by Farin in~\cite{Farin}. He
considered a triangle $\Delta $ partitioned into three subtriangles $\Delta
_{1},\Delta _{2}$ and $\Delta _{3}$ as shown in Figure~\ref{circles}. 
\begin{figure}[th]
\begin{minipage}[b]{0.45\linewidth}
\centering
 \includegraphics[keepaspectratio=true, clip=true, scale=3.3, width=100mm, height=100mm, trim=60mm 210mm 0mm 20mm]{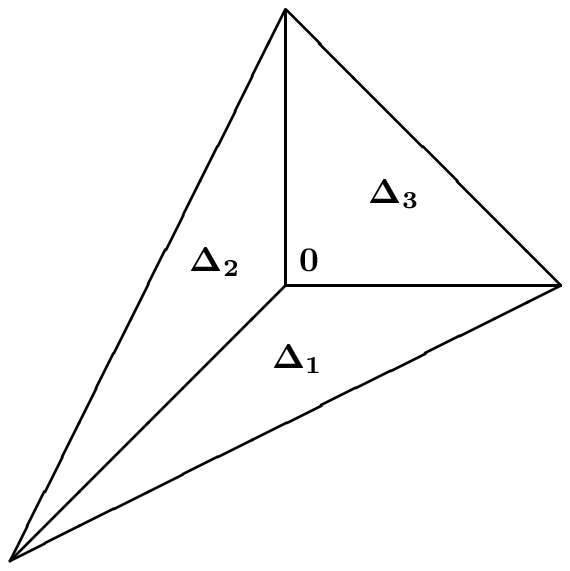}
 \caption{First example}
\label{circles}
 \end{minipage}
\hspace{1.1cm} 
\begin{minipage}[b]{0.45\linewidth}
 \centering
 \includegraphics[keepaspectratio=true, clip=true, scale=3.3, width=110mm, height=110mm, trim=80mm 210mm 0mm 20mm]{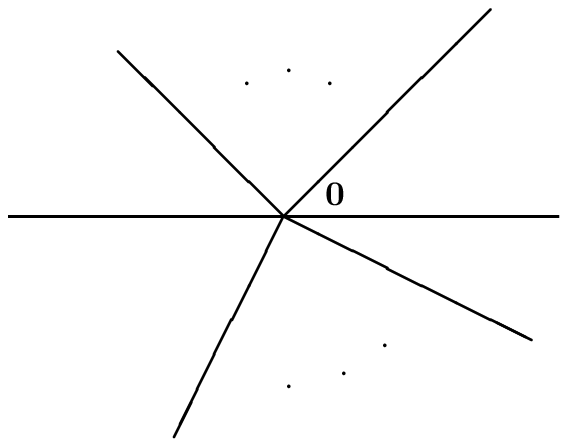}
\caption{Collinearity matters}
\label{counter}
\end{minipage}
\end{figure}
A spline $F$ on this triangulation of $\Delta $ is a function of two
variables such that for each $i=1,2,3$, the restriction $F\mid _{\Delta
_{i}}=f_{i}$ a polynomial. Farin proved that if the spline $F$ is
differentiable of order~$n$, then it has all $(n+1)$st order partial
derivatives at the origin $\mathbf{0}:=(0,0)$. That is for all $n\geq 1$: 
\begin{equation}
F\in C^{n}(\Delta )\Rightarrow F\in C^{n+1}(\mathbf{0}).  \label{1}
\end{equation}
Supersmoothness of splines was observed for trivariate splines in~\cite{Alf},
and studied in general in~\cite{me}. This phenomenon has been attributed
to the polynomial nature of splines. Recently, while flying on Delta, the
authors were struck by the similarity of the emblem of the airline and
Figure~\ref{circles}. This lead us to the subject of this paper. 

In the next section we will demonstrate that basic supersmoothness is a
rather general property of non-smooth curves, not just polynomials. Loosely
speaking: if we want to continuously glue two smooth bivariate  functions along a curve with
a ``corner" at a point $P$, the resulting continuous function
must be differentiable at $P$, as if to compensate for the singularity of
the curve. Moreover, locally, supersmoothness characterizes non-smooth
curves.

In Section 3 we address another peculiarity of supersmoothness. We
first show that property~(\ref{1}) holds for all smooth functions defined
over a partition of~$\mathbb{R}^{2}$ by $n+2$ non-collinear rays emanating
from the origin, $n\geq 0$. The assumption that the rays are not collinear
is significant. If just two of the rays are parallel the phenomenon of
automatic supersmoothness disappears alltogether.
 This can be seen on the following simple example. Consider the
partition of~$\mathbb{R}^{2}$ by the $x$-axis. For any $n\geq 0$, let $f(x,y)$
be equal to $y^{n+1}$ on the upper half plane and zero on the lower one. We
now can add any $n$ rays emanating from the origin but not along the $x$-axis.
 This will form a partition of~$\mathbb{R}^{2}$ by $n+2$ rays as in Figure~\ref{counter}. Then 
$f$ has all derivatives of order $n$, yet $f\notin C^{n+1}(\mathbf{0})$.

We note that all the proofs in this article are simple and should be accessible
to undergraduate students familiar with the basics of multivariate calculus, such as in~\cite{calc}.

\section{Gluing functions along a curve}

In this section we will show that a version of supersmoothness occurs when
we glue two differentiable functions along a curve with sharp corners as in
Figure~\ref{3}. Namely, we will show that the resulting piecewise function
is differentiable at every sharp corner of the curve. To some extent this
property of supersmoothness characterizes curves with sharp corners.

\begin{figure}[!ht]
\begin{minipage}[b]{0.45\linewidth}
\centering
 \includegraphics[keepaspectratio=true, clip=true, scale=3.3, width=230mm, height=230mm, trim=95mm 230mm 0mm 40mm]{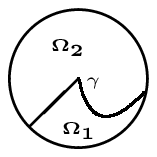}
 \caption{Curves for gluing}
\label{3}
 \end{minipage}
\hspace{1.1cm} 
\begin{minipage}[b]{0.45\linewidth}
 \centering
 \includegraphics[keepaspectratio=true, clip=true, scale=3.3, width=110mm, height=110mm, trim=80mm 210mm 0mm 20mm]{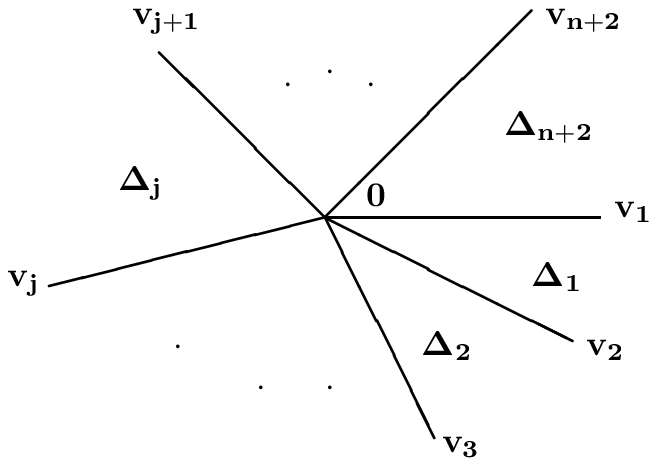}
\caption{Supersmoothness of higher derivatives}
\label{3.2}
\end{minipage}
\end{figure}

In contrast with most of the research on curves in both analysis and
differential geometry, we are interested in non-smooth curves. While
regularity of a curve is defined globally, non-smoothness has to be
localized to a point $P:=(x_{0},y_{0})$. Recall (cf.~\cite{calc}) that a curve 
\begin{equation*}
\gamma (t):=(u(t),v(t)):~[a,b]\rightarrow \mathbb{R}^{2}~
\end{equation*}
is regular if $\gamma $ is differentiable and $\gamma ^{\prime }(t)\neq 0$
for all $t$. In particular, for every point $t$ at least one of the
derivatives, say $u^{\prime }(t)\neq 0$. Hence the function $u(t)$ is
invertible in a neighborhood of that point. Setting $u(t)=x$ we have $t=u^{-1}(x)$ 
and we can reparametrize a portion of the curve as $(x,f(x))$
where $f:=v\circ u^{-1}$.

To keep this article within reach of calculus students we 
limit our considerations to non-self-intersecting curves and
adopt the following, intuitively clear version of ``local
smoothness". Let $\gamma$ be the trace of a continuous
non-self-intersecting curve $\gamma (t):~[a,b]\rightarrow \mathbb{R}^{2}$, also known as a Jordan arc. 
Without loss of generality assume that $\gamma(0)=P$ and $a<0<b.$

We shall say that $\gamma $ is smooth at $P$ if $\gamma$ can be
represented as a graph of a continuously differentiable function in some
neighborhood of $P$. More precisely,

\begin{definition}
\label{pointsmooth} The trace of a Jordan arc $\gamma $ is smooth at a point $P$  
if there exist open intervals $I$, $J$ and a function $f\in C^{1}(I)$ such that 
\begin{equation*}
P=(x_{0},f(x_{0}))\in I\times J,~~\hbox{and}~~\gamma \cap
(I\times J)=\{(x,f(x)),~~x\in I\}.
\end{equation*}
\end{definition}

\begin{theorem}
\label{newdirect} The trace of a Jordan arc $\gamma$ is smooth at $P$ if and
only if there exists a neighborhood $U$ of $P$ and a function $h$
continuously differentiable on $U$ such that 
\begin{equation}  \label{nd}
h(x,y)=0~~\hbox{if}~~(x,y)\in \gamma \cap U,~~\hbox{and}~~ \nabla h(P) \neq 
\mathbf{0}.
\end{equation}
\end{theorem}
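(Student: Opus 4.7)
The forward direction is by direct construction: given $f\in C^1(I)$ as in Definition \ref{pointsmooth}, I would set $U:=I\times J$ and $h(x,y):=y-f(x)$. Then $h\in C^1(U)$, $h$ vanishes on $\gamma\cap U=\{(x,f(x)):x\in I\}$, and $\nabla h=(-f'(x),1)$ is nowhere zero; in particular $\nabla h(P)\neq \mathbf{0}$.

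For the reverse direction, I would reduce to the implicit function theorem. Since $\nabla h(P)\neq \mathbf{0}$, without loss of generality assume $\partial h/\partial y(P)\neq 0$ (the other case is symmetric, swapping the roles of $x$ and $y$). The IFT then produces open intervals $I_0\ni x_0$ and $J_0\ni y_0$ with $I_0\times J_0\subset U$ and a function $f\in C^1(I_0)$ such that the zero set $\{h=0\}\cap (I_0\times J_0)$ coincides with the graph $\{(x,f(x)):x\in I_0\}$. Consequently $\gamma\cap(I_0\times J_0)$ is contained in this graph.

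The delicate step, and the place where the Jordan-arc hypothesis does real work, is securing the reverse inclusion so that $\gamma\cap (I\times J)$ equals the \emph{entire} graph over some subinterval $I\ni x_0$. My plan is to exploit continuity and injectivity of the parametrization $\gamma(t)=(u(t),v(t))$. By continuity, choose $\delta>0$ with $\gamma((-\delta,\delta))\subset I_0\times J_0$; then $v(t)=f(u(t))$ on $(-\delta,\delta)$, so injectivity of $\gamma$ forces injectivity of the continuous real-valued function $u$. A continuous injective function on an interval is strictly monotonic, so $u((-\delta,\delta))$ is an open interval $I\subset I_0$ containing $x_0$, and $u$ maps onto $I$. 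Choosing an open $J\subset J_0$ with $f(I)\subset J$, I would verify $\gamma\cap (I\times J)=\{(x,f(x)):x\in I\}$: the inclusion $\subset$ follows from the IFT characterization of the zero set of $h$, while $\supset$ follows from the surjectivity of $u$ onto $I$.

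The main obstacle is precisely this surjectivity step. Without the injectivity built into the Jordan-arc hypothesis, a continuous curve could lie inside the zero set of $h$ and yet trace out only a proper subset of the implicit graph near $P$ (for instance by terminating at $P$ or oscillating on one side of $x_0$), and the equality demanded by Definition \ref{pointsmooth} would fail. Everything else is a careful bookkeeping of neighborhoods.
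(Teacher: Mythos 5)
Your proposal is correct and follows essentially the same route as the paper: the forward direction via $h(x,y):=y-f(x)$, and the converse via the Implicit Function Theorem followed by a topological argument using the non-self-intersection of $\gamma$ to upgrade the inclusion of $\gamma$ in the implicit graph to equality over a subinterval. The only (cosmetic) difference is that you invoke ``continuous injective on an interval implies strictly monotone'' to see that $u$ maps an open interval onto an open interval around $x_0$, whereas the paper argues directly with the sign of $u(\pm c/2)$ and the intermediate value theorem; both are the same idea, and yours is arguably the cleaner packaging of it.
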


\begin{proof}
If $\gamma$ is smooth at $P$ we use the neighborhood $I\times J$ and the 
$C^1$-continuous function $f$ from Definition~\ref{pointsmooth} to construct 
$h(x,y):=y-f(x)$. Clearly, $h$ satisfies all the desirable properties.
Conversely, without loss of generality, assume $P=(0,0)$ and let $h$ be a 
$C^1$-continuous function on some neighborhood~$U$ of~$P$, such that $h$
vanishes on~$\gamma$, and $h_y(P)\neq 0$. Then by the Implicit Function
Theorem, there exist open intervals $I_1$ and $J_1$ and a $C^1$-continuous
function $f$ such that 
\begin{equation*}
h(x,y)=0,~~(x,y)\in ~I_1\times J_1~~\hbox{iff}~~y=f(x),~~x\in I_1,~~y\in J_1.
\end{equation*}
We can assume that $I_1\times J_1\subset U$, which implies that 
\begin{equation*}
\gamma\cap (I_1\times J_1)\subseteq \{(x,f(x)),~~x\in I_1\}.
\end{equation*}
We now need to show that there exist perhaps other intervals $I\subseteq
I_1$ and $J\subseteq J_1$ such that $\gamma$ coincides with the graph of $f$
in $I\times J$. To this end, we consider the inverse image 
$\gamma^{-1}(I_1\times J_1)$, which is an open set in $[a,b]$ containing zero.
Thus, there exists $c>0$ such that $\gamma(t):=(u(t),v(t))$ maps $(-c,c)$
into $\gamma\cap (I_1\times J_1)$. We observe that if $u(c/2)=0$, then 
$v(c/2)$ also vanishes since $f$ is a function passing through $(0,0)$. Then
we have $\gamma(c/2)=\gamma(0)$ which contradicts the assumption that $\gamma
$ has no self-intersections. Thus, neither $u(c/2)$ nor $u(-c/2)$ vanish.
Without loss of generality we can assume $u(c/2)>0$. Then $u(-c/2)$ must be
negative. Otherwise either $\gamma$ has a self-intersection or $f$ is not a
function. Since $\gamma(t)$ is continuous, its trace from $t=-c/2$ to $t=c/2$
must coincide with the graph of $f$ from $u(-c/2)<0$ to $u(c/2)>0$. Thus,
for $I:=\big(u(-c/2),u(c/2)\big)$, and $J:=J_1$, the function $f$ satisfies
Definition~\ref{pointsmooth}.
\end{proof}

As a corollary we obtain the promised result on supersmoothness:

\begin{theorem}
\label{direct} Let $\gamma \subset \mathbb{R}^2$ be the trace of a Jordan
arc that divides the open disk $\Omega $ into two subsets $\Omega _{1}$ and 
$\Omega _{2}$ as in Figure~\ref{3}. Further assume that $\gamma$ is not
smooth at $P\in \gamma$. Let $f_1,f_2$ be $C^{1}$ functions on $\Omega $
continuously glued along $\gamma$, that is, let 
\begin{equation}
F(x,y):=\left\{ 
\begin{array}{ccc}
\label{2.7} f_1(x,y) & \text{if} & (x,y)\in \Omega _{1} \\ 
f_2(x,y) & \text{if} & (x,y)\in \Omega _{2}
\end{array}
\right.
\end{equation}
be a continuous function on $\Omega $. Then the piecewise function $F$ is
differentiable at $P$, that is, 
\begin{equation}  \label{2.8}
\nabla f_1(P)=\nabla f_2(P).
\end{equation}
\end{theorem}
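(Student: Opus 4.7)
My plan is to deduce the theorem directly from Theorem~\ref{newdirect} by considering the difference $g:=f_1-f_2$. Since $f_1$ and $f_2$ are assumed $C^1$ on all of $\Omega$, the function $g$ is $C^1$ on $\Omega$, and in particular on some neighborhood $U$ of $P$.

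First, I would verify that $g$ vanishes on $\gamma \cap U$. Because $F$ is continuous on $\Omega$ and $\gamma$ is the common boundary of $\Omega_1$ and $\Omega_2$, any point $Q\in\gamma$ can be approached from both sides, forcing $f_1(Q)=F(Q)=f_2(Q)$, so $g(Q)=0$.

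Second, I would apply the contrapositive of Theorem~\ref{newdirect}. That theorem says $\gamma$ is smooth at $P$ if and only if there exists a $C^1$ function on a neighborhood of $P$ which vanishes on $\gamma$ near $P$ and whose gradient at $P$ is nonzero. By hypothesis $\gamma$ is \emph{not} smooth at $P$, so no such function exists. Since $g$ does satisfy the first two conditions ($C^1$ near $P$, vanishing on $\gamma\cap U$), the only way out is $\nabla g(P)=\mathbf{0}$, which is precisely~(\ref{2.8}).

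The only subtle point — and the one I would be careful about — is confirming that $g = 0$ on \emph{all} of $\gamma \cap U$, not merely at isolated points. This requires that every point of $\gamma \cap U$ be a limit point of both $\Omega_1$ and $\Omega_2$; this is built into the hypothesis that $\gamma$ divides $\Omega$ into the two open subsets $\Omega_1,\Omega_2$ as in Figure~\ref{3}, so continuity of $F$ forces equality of $f_1$ and $f_2$ everywhere along $\gamma$. Once this is granted, the result is a one-line consequence of Theorem~\ref{newdirect}.
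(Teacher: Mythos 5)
Your argument is exactly the paper's: form $h=f_1-f_2$, note it is $C^1$ and vanishes on $\gamma$ by continuity of $F$, and invoke the contrapositive of Theorem~\ref{newdirect} to conclude $\nabla h(P)=\mathbf{0}$. Your extra care in checking that $h$ vanishes on all of $\gamma\cap U$ (every point of $\gamma$ being approachable from both $\Omega_1$ and $\Omega_2$) is a point the paper passes over more quickly, but the proof is the same.
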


\begin{proof}
Consider a $C^{1}$ function $h=f_1-f_2$. The fact that $f_1$ and $f_2$ are
continuously glued along $\gamma$ means that $h(\gamma)=0$ and by Theorem~\ref{newdirect} 
\begin{equation*}
0=\nabla h(P)=\nabla f_1(P)-\nabla f_2(P). 
\end{equation*}
Thus, $\nabla f_1(P)=\nabla f_2(P)$, and the proof is complete.
\end{proof}

A partial converse of Theorem~\ref{direct} holds true in the following sense:

\begin{theorem}
\label{partial} Let $\gamma \subset \mathbb{R}^{2}$ be the trace of a Jordan
arc that divides the open disk $\Omega $ into two subsets  $\Omega _{1}$ and 
$\Omega _{2}$. Assume that $\gamma $ is smooth at a point $P\in \gamma $.
Then there exists a neighborhood $U$ of $P$ and and two differentiable
functions $f_1,g_1\in C^{1}(U)$ such that the function 
\begin{equation}
F(x,y):=\left\{ 
\begin{array}{ccc}
\label{2.1} f_1(x,y) & \text{if} & (x,y)\in \Omega _{1} \\ 
f_2(x,y) & \text{if} & (x,y)\in \Omega _{2}
\end{array}
\right.
\end{equation}
is not differentiable at $P$.
\end{theorem}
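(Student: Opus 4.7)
The plan is to read off the desired pair $(f_1,f_2)$ directly from the easy direction of Theorem~\ref{newdirect}: take one function to be identically zero and the other to be the witness of smoothness of $\gamma$, and then observe that the resulting piecewise function has mismatched one-sided gradients.

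Since $\gamma$ is smooth at $P$, Theorem~\ref{newdirect} supplies a neighborhood $U$ of $P$ and a function $h\in C^1(U)$ with $h\equiv 0$ on $\gamma\cap U$ and $\nabla h(P)\neq \mathbf{0}$. I would shrink $U$ if necessary so that $\gamma\cap U$ is (by Definition~\ref{pointsmooth}) a graph that cleanly separates $U$ into $U\cap\Omega_1$ and $U\cap\Omega_2$, and then simply set $f_1:=0$ and $f_2:=h$ on $U$. Both are clearly in $C^1(U)$, and since $h$ vanishes on $\gamma\cap U$ the piecewise function $F$ extends continuously across $\gamma$ at value $0$.

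The remaining task is to show that $F$ is not differentiable at $P$. I would argue by contradiction: suppose $F$ were differentiable at $P$ with gradient $\mathbf{v}$. Because $\gamma$ is locally a smooth graph through $P$, each $\Omega_i$ contains a whole open half-neighborhood of $P$ and in particular an open cone of directions along which $P$ can be approached while $F=f_i$. Subtracting the standard first-order Taylor expansion of $f_i$ at $P$ (valid since $f_i\in C^1(U)$) from the hypothetical first-order expansion of $F$ at $P$ yields
\begin{equation*}
\bigl(\mathbf{v}-\nabla f_i(P)\bigr)\cdot\bigl((x,y)-P\bigr)=o\bigl(|(x,y)-P|\bigr)
\end{equation*}
as $(x,y)\to P$ through $\Omega_i$. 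Since the directions in $\Omega_i$ fill an open cone, this forces $\mathbf{v}=\nabla f_i(P)$ for $i=1,2$. But $\nabla f_1(P)=\mathbf{0}\neq \nabla h(P)=\nabla f_2(P)$, a contradiction.

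There is essentially no hard step; the one sanity check to be careful about is that $P$ is approached within each $\Omega_i$ from enough directions to pin down the gradient of a $C^1$ function at $P$, and this is exactly guaranteed by the local-graph description of $\gamma$ at $P$ afforded by Definition~\ref{pointsmooth}.
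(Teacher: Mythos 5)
Your construction is exactly the paper's: invoke the easy direction of Theorem~\ref{newdirect} to get $h\in C^1(U)$ vanishing on $\gamma\cap U$ with $\nabla h(P)\neq\mathbf{0}$, set one piece equal to $h$ and the other to $0$, and conclude from the gradient mismatch. The only difference is that you spell out why unequal one-sided gradients preclude differentiability at $P$ (each $\Omega_i$ supplies an open cone of approach directions, which pins down the would-be gradient of $F$ to $\nabla f_i(P)$ for both $i$), a step the paper simply asserts; this is a harmless and in fact welcome elaboration.
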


\begin{proof}
Let $U$ and $h$ be chosen as in Theorem~\ref{newdirect}, i.e., satisfying
conditions~(\ref{nd}). Let $f_1(x,y):=h(x,y)$ and $f_2(x,y)\equiv 0$. Then,
since $h(\gamma \cap U)=0$, the function $F$ defined by~(\ref{2.1}) is
continuous and not differentiable at $P$ because 
$\nabla f_2(P)=\mathbf{0}\neq \nabla f_1(P)$.
\end{proof}

Theorem~\ref{partial} provides only a partial converse of Theorem~\ref{direct} 
because the function $F$ is defined locally, in a neighborhood $U$
of $P$, and not on all of~$\Omega$. We believe that the global
version of this theorem also holds and end this section with a conjecture.

\begin{conjecture}
\label{conj} Let $\gamma \subset \mathbb{R}^2$ be a continuous curve that
divides an open disk~$\Omega $ centered at $P$ into two subsets $\Omega _{1}$
and $\Omega _{2}$. Then $\gamma$ is smooth at $P$ if and only if we can glue
two continuously differentiable functions along the curve as in~(\ref{2.1})
so that the resulting piecewise function $F$ is not differentiable at~$P$.
\end{conjecture}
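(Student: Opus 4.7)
My plan is to prove the two implications separately. The ``smooth implies non-differentiable gluing exists'' direction is a cutoff-based global extension of Theorem~\ref{partial}; the converse combines the Implicit Function Theorem with an elementary topological separation argument.

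For the forward direction, assume $\gamma$ is smooth at $P$. Apply Theorem~\ref{newdirect} to produce a neighborhood $U$ of $P$ and a function $h \in C^{1}(U)$ with $h \equiv 0$ on $\gamma \cap U$ and $\nabla h(P) \neq \mathbf{0}$. Pick a $C^{\infty}$ cutoff $\varphi$ on $\Omega$ with $\varphi \equiv 1$ on a smaller open ball $V \ni P$ and $\mathrm{supp}(\varphi)$ a compact subset of $U$. Let $f_{1} := \varphi h$ inside $U$, extended by $0$ outside $\mathrm{supp}(\varphi)$, and $f_{2} \equiv 0$ on $\Omega$. Then $f_{1}\in C^{1}(\Omega)$ (the two definitions agree on the overlap, both being $0$ there), and on $\gamma$ we have $f_{1} = f_{2} = 0$ because at each point of $\gamma$ either $\varphi$ or $h$ vanishes; hence $F$ in~(\ref{2.1}) is continuous. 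The product rule gives $\nabla f_{1}(P) = \varphi(P)\nabla h(P) + h(P)\nabla\varphi(P) = \nabla h(P) \neq \mathbf{0} = \nabla f_{2}(P)$, so $F$ is not differentiable at $P$.

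For the converse, assume such $F$ exists, with $f_{1},f_{2}\in C^{1}(\Omega)$ continuously glued along $\gamma$ and $F$ not differentiable at $P$. Set $h := f_{1} - f_{2} \in C^{1}(\Omega)$. Then $h \equiv 0$ on $\gamma$ and $\nabla h(P) = \nabla f_{1}(P) - \nabla f_{2}(P) \neq \mathbf{0}$. The Implicit Function Theorem produces a small open ball $V \ni P$ in which $\{h = 0\} \cap V$ is the graph of a $C^{1}$ function $g$, and $V \setminus \{h = 0\}$ is the disjoint union of its two signed pieces $\{h > 0\}$ and $\{h < 0\}$. Certainly $\gamma \cap V \subseteq \{h = 0\} \cap V$, so the proof reduces to showing the reverse inclusion in some sub-neighborhood of $P$, which by Definition~\ref{pointsmooth} yields smoothness of $\gamma$ at $P$.

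The main obstacle is this last topological identification, which I would attack by contradiction. If $\gamma \cap V$ omitted some point $Q$ of the graph, then, using that $\gamma$ is closed in $\Omega$, a small open disk $D \ni Q$ lies in $V \setminus \gamma$ and intersects both $\{h > 0\}$ and $\{h < 0\}$, forcing $V \setminus \gamma$ to be connected. But $V \setminus \gamma = (\Omega_{1}\cap V) \sqcup (\Omega_{2}\cap V)$ is a disjoint union of two nonempty open sets (both nonempty because $P \in \overline{\Omega_{1}} \cap \overline{\Omega_{2}}$), contradicting connectivity. The technical care lies in making this argument rigorous for an arbitrary continuous curve $\gamma$ merely assumed to divide $\Omega$: one must verify that $\Omega_{1}\cap V$ and $\Omega_{2}\cap V$ are each nonempty arbitrarily close to $P$, that $\gamma$ is closed, and that the separation hypothesis really does partition $V \setminus \gamma$ into exactly these two pieces. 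For Jordan arcs the verification is immediate from a monotonicity argument on the parametrization; handling the fully general continuous case is the delicate heart of the conjecture.
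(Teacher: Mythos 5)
The paper offers no proof of this statement: it is explicitly left as a conjecture, precisely because Theorem~\ref{partial} only produces the non-differentiable gluing on a neighborhood $U$ of $P$ rather than on all of $\Omega$. Your forward direction squarely addresses that obstruction, and it is correct: multiplying $h$ by a smooth cutoff supported in $U$ and extending by zero gives $f_1\in C^1(\Omega)$ that still vanishes on all of $\gamma$ (on $\gamma\cap U$ because $h$ does, elsewhere because $\varphi$ does) and still satisfies $\nabla f_1(P)=\nabla h(P)\neq\mathbf{0}=\nabla f_2(P)$. For Jordan arcs this settles the direction the authors singled out as open. The only caveat is that the conjecture is phrased for a general ``continuous curve,'' for which the paper never even defines smoothness at $P$ (Definition~\ref{pointsmooth} is stated only for traces of Jordan arcs), so what you have actually proved is the Jordan-arc version.

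The converse is where the genuine gap sits, and you have located it yourself but not closed it. Note first that for Jordan arcs your topological detour is unnecessary: from $h=f_1-f_2\in C^1$, $h|_{\gamma}=0$ and $\nabla h(P)\neq\mathbf{0}$, Theorem~\ref{newdirect} already yields smoothness of $\gamma$ at $P$ directly — this is verbatim the proof of Theorem~\ref{direct}, whose contrapositive is the converse direction of the conjecture. Your alternative route, showing that $\gamma\cap V$ exhausts the graph $\{h=0\}\cap V$ by a connectedness argument, hinges on both $\Omega_1\cap V$ and $\Omega_2\cap V$ being nonempty, i.e.\ on $P\in\overline{\Omega_1}\cap\overline{\Omega_2}$. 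That is not among the hypotheses and is not ``immediate from a monotonicity argument on the parametrization'': for a crosscut of a disk, the fact that every interior point of the arc lies on the boundary of both complementary domains is a genuine plane-topology statement of Jordan-curve-theorem strength, and for a continuous curve allowed to self-intersect it is not even clear how to formulate it. So as written the proposal establishes the forward implication for Jordan arcs, while the converse is either redundant with Theorem~\ref{direct} (Jordan arcs) or still open (general curves); the conjecture as stated is not fully resolved, as you acknowledge.
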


\section{Supersmoothness of higher derivatives.}

Consider two non-collinear rays $v_{1}$ and $v_{2}$ emanating from the
origin in~$\mathbb{R}^{2}$. The curve formed by these two rays is not smooth
and partitions the open unit disk $\Omega $ into two sectors $\Delta _{1}$
and $\Delta _{2}$. It follows from the results of the previous section that
two differentiable functions $f_{1}$ and $f_{2}$ continuously glued along
the boundary of the sectors as in~(\ref{2.1}) produce a piecewise function 
$F_{2}$ differentiable at the origin: 
\begin{equation}
F_{2}\in C(\Omega )~\Rightarrow ~F_{2}\in C^{1}(\mathbf{0})\text{.}
\label{3.1}
\end{equation}
Farin's observation~(\ref{1}) shows that for three pairwise non-collinear
rays emanating from the origin and a piecewise function $F_{3}$ consisting
of three differentiable pieces as in Figure~\ref{circles} the following
holds:
\begin{equation*}
F_{3}\in C^{1}(\Omega )~\Rightarrow ~F_{3}\in C^{2}(\mathbf{0}).
\end{equation*}
However, as it was pointed out in the introduction, for three non-collinear
rays amplification~(\ref{3.1}) may not hold, that is, in general 
\begin{equation*}
F_{3}\in C(\Omega )~~\not\Rightarrow ~F_{3}\in C^{1}(\mathbf{0}).
\end{equation*}
In this section we extend this pattern. For a fixed $n\geq 0$, we partition
the open disk $\Omega $ into $n+2$ sectors $\Delta _{1},\dots ,\Delta _{n+2},
$ by pairwise non-collinear vectors (rays) $v_{1},\dots ,v_{n+2},$
positioned clockwise as in Figure~\ref{3.2}. Then we create a piecewise
function $F_{n+2}$ by gluing $n+2$ functions $f_{1},\ldots ,f_{n+2}\in
C^{n}(\Omega )$ along the rays. Thus for $1\leq j\leq n+1$, the sector 
$\Delta _{j}$ is formed by $v_{j}$ and $v_{j+1}$, and the sector $\Delta
_{n+2}$ is formed by $v_{n+2}$ and $v_{1}$. We will show that similarly 
to~(\ref{1}) the following holds:
\begin{equation}
F_{n+2}\in C^{n}(\Omega )~~\Rightarrow ~F_{n+2}\in C^{n+1}(\mathbf{0});
\label{mainresult}
\end{equation}
yet the weaker assumption $F_{n+2}\in C^{n-1}(\Omega )$ may not imply the
associated conclusion that $F_{n+2}\in C^{n}(\mathbf{0})$.

We start with a simple lemma that shows that two differentiable functions
continuously glued along a ray $v$ must be differentiable in the direction
of $v$. We use $D_{v}$ to denote the directional derivative in the direction
of~$v$. 

\begin{lemma}
\label{lemma}  Let $v=(a,b)$ be a unit vector in $\mathbb{R}^2$. Let $f$ and 
$g$ be continuously differentiable functions in an $\varepsilon$-neighborhood 
of the origin in $\mathbb{R}^2$ such that  
\begin{equation}  \label{3.3}
f(ta,tb)=g(ta,tb),~~\hbox{for all} ~~ t\in [0,\varepsilon ).
\end{equation}
Then  
\begin{equation*}
D_{v}f(ta,tb)=D_{v}g(ta,tb),~~\hbox{for all} ~~ t\in [0,\varepsilon ).
\end{equation*}
\end{lemma}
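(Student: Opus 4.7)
The plan is to reduce the two-dimensional assertion to a one-variable statement via the chain rule. Define the single-variable functions
\[
\phi(t) := f(ta, tb), \qquad \psi(t) := g(ta, tb), \qquad t \in [0,\varepsilon).
\]
The hypothesis~(\ref{3.3}) says exactly that $\phi \equiv \psi$ on $[0,\varepsilon)$, so in particular $\phi'(t) = \psi'(t)$ wherever both derivatives exist (using a one-sided derivative at $t=0$).

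Next, I would identify $\phi'$ and $\psi'$ with the directional derivatives in the statement. Since $f \in C^1$ in an $\varepsilon$-neighborhood of the origin, the chain rule gives
\[
\phi'(t) = a\, f_x(ta,tb) + b\, f_y(ta,tb),
\]
which by the very definition of directional derivative equals $D_v f(ta,tb)$ because $v=(a,b)$ is a unit vector. The same computation applies to $g$ and yields $\psi'(t) = D_v g(ta,tb)$. Combined with the equality $\phi' \equiv \psi'$ from the first paragraph, this is exactly the conclusion of the lemma.

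The only mild point to watch is the endpoint $t=0$: the chain rule as usually stated assumes an open interval, while here the parameter $t$ runs on $[0,\varepsilon)$. I would handle this by using the one-sided (right) derivative at $0$; continuity of the partials of $f$ and $g$ at the origin guarantees that the one-sided chain-rule formula still holds, so no genuine difficulty arises. There is no real obstacle in this lemma -- it is essentially the chain rule packaged to match the notation used later -- and the work is done once the two composite functions $\phi$ and $\psi$ are introduced.
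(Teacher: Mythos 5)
Your proof is correct and is essentially the paper's own argument: the authors likewise reduce the matter to comparing the two-sided directional-derivative limit (which exists by continuous differentiability of $f$ and $g$) with the one-sided limit along the ray, where the hypothesis forces agreement. Introducing $\phi(t)=f(ta,tb)$ and $\psi(t)=g(ta,tb)$ and invoking the chain rule is just a notational repackaging of the same difference-quotient computation, and your handling of the endpoint $t=0$ matches theirs.
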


\begin{proof}
It suffices to prove the result for $t=0$. We obtain  
\begin{align}
D_{v}f(\mathbf{0})=& \displaystyle\lim_{t\rightarrow 0}\frac{f(ta,tb)-f(%
\mathbf{0})}{t}=\lim_{t\rightarrow 0+}\frac{f(ta,tb)-f(\mathbf{0})}{t} 
\notag \\
\overset{\text{by (\ref{3.3})}}{=} &\displaystyle\lim_{t\rightarrow 0+}\frac{%
g(ta,tb)-g(\mathbf{0})}{t} =\lim_{t\rightarrow 0}\frac{g(ta,tb)-g(\mathbf{0})%
}{t}=D_{v}g(\mathbf{0})\text{,}  \notag
\end{align}
where the second and the fourth equalities follow from the continuity of $%
D_vf$ and $D_vg$, respectively. 
\end{proof}

We are now ready to prove statement~(\ref{mainresult}). For brevity, we use $%
F:=F_{n+2}$. 

\begin{theorem}
\label{main} Let functions $f_{1},\ldots ,f_{n+2},$ be $n$ times
continuously differentiable on $\Omega $ and let $F$ be defined piecewise on
each sector $\Delta_j$ by $F\mid _{\Delta _j}:=f_{j}$, $j=1,\dots,n+2$. If 
$F\in C^{n}(\Omega )$ then $F$ has all derivatives of order $n+1$ at the
origin; that is, $F\in C^{n+1}(\mathbf{0})$, $n\geq 0$. 
\end{theorem}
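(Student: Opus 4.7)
The plan is to apply Lemma~\ref{lemma} to the $n$-th order partial derivatives of the pieces $f_j$, translate the resulting equalities into linear constraints on the $(n+1)$-st order Taylor coefficients of the differences $f_{\ell-1} - f_\ell$ at the origin, and then collapse all of these constraints around the cyclic arrangement of sectors by invoking the linear independence of powers of pairwise non-proportional linear forms.

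First I would observe that since $F \in C^n(\Omega)$, every partial derivative $D^\alpha F$ with $|\alpha|\le n$ is continuous on $\Omega$ and, on each closed sector $\overline{\Delta_j}$, equals $D^\alpha f_j$. Consequently, along every ray $v_\ell = (a_\ell, b_\ell)$ (with the cyclic convention that $v_\ell$ separates $\Delta_{\ell-1}$ from $\Delta_\ell$, where $\Delta_0 := \Delta_{n+2}$), the functions $D^\alpha f_{\ell-1}$ and $D^\alpha f_\ell$ coincide. Applying Lemma~\ref{lemma} to the pair $\bigl(D^\alpha f_{\ell-1},\,D^\alpha f_\ell\bigr)$ for each $|\alpha| = n$ yields
\begin{equation*}
a_\ell\, D^{(\alpha_1+1,\alpha_2)}(f_{\ell-1}-f_\ell)(\mathbf{0}) + b_\ell\, D^{(\alpha_1,\alpha_2+1)}(f_{\ell-1}-f_\ell)(\mathbf{0}) = 0,
\end{equation*}
one linear constraint for every $|\alpha| = n$ on the $(n+1)$-st order partial-derivative discrepancies between $f_{\ell-1}$ and $f_\ell$ at the origin.

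Let $p_j$ denote the homogeneous degree-$(n+1)$ part of the Taylor polynomial of $f_j$ at $\mathbf{0}$ and put $q_\ell := p_{\ell-1} - p_\ell$; since the lower-order derivatives at $\mathbf{0}$ already match (they all equal $D^\alpha F(\mathbf{0})$), the polynomial $q_\ell$ carries the entire discrepancy. The constraints above read $D_{v_\ell} D^\alpha q_\ell(\mathbf{0}) = 0$ for every $|\alpha| = n$, and unrolling the induced recursion on the coefficients of $q_\ell$ gives
\begin{equation*}
q_\ell(x,y) = C_\ell\, L_\ell(x,y)^{n+1}, \qquad L_\ell(x,y) := b_\ell x - a_\ell y,
\end{equation*}
where $L_\ell$ is the linear form vanishing on $v_\ell$ and $C_\ell \in \mathbb{R}$. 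Summing around the cyclic arrangement of sectors makes the $q_\ell$ telescope to zero, so
\begin{equation*}
0 = \sum_{\ell=1}^{n+2} q_\ell = \sum_{\ell=1}^{n+2} C_\ell\, L_\ell^{n+1}.
\end{equation*}

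The main obstacle is the final step: the linear independence of $L_1^{n+1}, \ldots, L_{n+2}^{n+1}$ in the $(n+2)$-dimensional space of bivariate homogeneous polynomials of degree $n+1$. I would handle this by a Vandermonde-type argument: after a linear change of coordinates that arranges every $b_\ell \neq 0$, the forms become (up to scaling) univariate polynomials $(b_\ell t - a_\ell)^{n+1}$, whose independence is standard because the roots $a_\ell / b_\ell$ are pairwise distinct---precisely the non-collinearity of the rays, whose indispensability is illustrated by Figure~\ref{counter}. Once the independence is in hand, $C_\ell = 0$ for every $\ell$, hence $q_\ell \equiv 0$, the $(n+1)$-st order partial derivatives of the pieces $f_j$ at $\mathbf{0}$ all coincide, and $F \in C^{n+1}(\mathbf{0})$.
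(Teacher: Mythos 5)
Your proof is correct, but it takes a genuinely different route from the paper's. The paper fixes two adjacent pieces $f_1,f_2$ and reduces the claim to the directional derivatives $D_{v_2}^kD_{v_1}^{n-k}$ at the origin: every term containing a factor $D_{v_2}$ follows directly from Lemma~\ref{lemma} applied along the shared ray $v_2$, and the one remaining pure power of $D_{v_1}$ is handled by writing $D_{v_1}=\alpha_jD_{v_2}+\beta_jD_{v_j}$ for $j=3,\dots,n+2$ and expanding the product, which isolates a term $\gamma\prod_{j=3}^{n+2}D_{v_j}$ that is then carried around the entire fan of sectors, one ray at a time, again via Lemma~\ref{lemma}. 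You instead package the jump across each ray $v_\ell$ into the homogeneous degree-$(n+1)$ polynomial $q_\ell$, use the Lemma to conclude $D_{v_\ell}q_\ell\equiv 0$ and hence $q_\ell=C_\ell L_\ell^{n+1}$, telescope cyclically, and finish with the linear independence of the $n+2$ powers $L_\ell^{n+1}$ in the $(n+2)$-dimensional space of bivariate homogeneous polynomials of degree $n+1$. Both arguments rest on the same two ingredients --- the Lemma at each ray, plus one full trip around the cycle of sectors --- but yours is the classical spline-theoretic ``smoothness conditions'' formulation: it makes completely transparent why exactly $n+2$ pairwise non-collinear rays are required (that is precisely the dimension count behind the Vandermonde step), and it explains why the counterexample of Figure~\ref{counter} and the construction closing Section~3 are sharp, at the cost of importing a small amount of linear algebra; the paper's version stays strictly within directional derivatives and a single calculus lemma, which suits its stated goal of accessibility. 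One bookkeeping point, common to both arguments: applying Lemma~\ref{lemma} to the $n$th-order derivatives of the pieces requires those derivatives to be continuously differentiable, i.e., the pieces to admit $(n+1)$st-order derivatives near $\mathbf{0}$; this is needed for the conclusion $F\in C^{n+1}(\mathbf{0})$ to be meaningful and is implicitly assumed in the paper as well, so it is not a gap specific to your argument.
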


\begin{proof}
If $n=0$, the proof is given in Theorem~\ref{direct}. Let $n\geq 1$. We will
show that for two neighboring functions, say $f_{j}$ and $f_{j+1}$, all
partial derivatives of order $n+1$ coincide at the origin. Then for every $k=0,\dots ,n$,  
\begin{equation*}
D_{x}^{k}D_{y}^{n-k}f_{1}(\mathbf{0})=D_{x}^{k}D_{y}^{n-k}f_{2}(\mathbf{0}
)=\ldots =D_{x}^{k}D_{y}^{n-k}f_{n+2}(\mathbf{0}),
\end{equation*}
which would prove the theorem. Without loss of generality we consider the
neighboring functions $f_{1}$ and $f_{2}$. It is clearly enough to prove
that  
\begin{equation}  \label{deriv}
D_{v_{2}}^{k}D_{v_{1}}^{n-k}f_{1}(\mathbf{0}%
)=D_{v_{2}}^{k}D_{v_{1}}^{n-k}f_{2}(\mathbf{0}),~~\hbox{for every}~~
k=0,\dots ,n.
\end{equation}
Observe that for $k\geq 1$, the assumption $F\in C^{n}(\Omega )$ implies
that the functions $D_{v_{2}}^{k-1}D_{v_{1}}^{n-k}f_{1}$ and  
$D_{v_{2}}^{k-1}D_{v_{1}}^{n-k}f_{2}$ are continuously glued along the ray~$v_{2}$. 
Hence, by Lemma~\ref{lemma} we obtain  
\begin{equation*}
D_{v_{2}}\big(D_{v_{2}}^{k-1}D_{v_{1}}^{n-k}f_{1}\big )(\mathbf{0})=D_{v_{2}}
\big(D_{v_{2}}^{k-1}D_{v_{1}}^{n-k}f_{2}\big)(\mathbf{0})
\end{equation*}
which implies~(\ref{deriv}) for $k\geq 1$. Hence it remains to prove that  
\begin{equation}  \label{derivn}
D_{v_{1}}^{n}f_{1}(\mathbf{0})=D_{v_{1}}^{n}f_{2}(\mathbf{0}).
\end{equation}
Since all the vectors $v_{j}$ are pairwise non-collinear we can find
constants $\alpha _{j}$ and $\beta _{j}$ such that $v_{1}=\alpha
_{j}v_{2}+\beta_{j}v_{j}$ for all $j=3,\dots, n+2$. Then  
\begin{align}  \label{nderiv}
D_{v_{1}}^{n} =&(\alpha _{3}D_{v_{2}}+\beta _{3}D_{v_{3}})\ldots (\alpha
_{n+2}D_{v_{2}}+\beta_{n+2}D_{v_{n+2}}) \\
&=D_{v_{2}}p(D_{v_{2}},\dots , D_{v_{n+2}})+\gamma
\prod\limits_{j=3}^{n+2}D_{v_j}  \notag
\end{align}
for some constant $\gamma$ and some homogeneous polynomial $p$ of order 
$n-1$. Since, by the assumption,  $p(D_{v_{2}},\dots, D_{v_{n+2}})f_{1}$ and $%
p(D_{v_{2}},\dots,D_{v_{n+2}})f_{2}$ coincide along the ray $v_{2}$, by
Lemma~\ref{lemma}  
\begin{equation}  \label{first}
D_{v_{2}}p(D_{v_{2}},\dots, D_{v_{n+2}})f_{1}(\mathbf{0})=
D_{v_{2}}p(D_{v_{2}},\dots,D_{v_{n+2}})f_{2}(\mathbf{0})\text{.}
\end{equation}
Similarly, for every $k=3,\ldots ,n+2$, the functions 
\begin{equation*}
{\prod\limits_{j=3, j\neq k}^{n+2}}D_{v_j}f_{k-1}~~~\hbox{and}~~~{%
\prod\limits_{j=3, j\neq k}^{n+2}}D_{v_j}f_{k}
\end{equation*}
coincide along the ray $v_{k}$. Hence, by Lemma~\ref{lemma}, for every $%
k=3,\ldots ,n+2$,  
\begin{align}
\prod\limits_{j=3}^{n+2}D_{v_j}f_{k-1}(\mathbf{0}) =&D_{v_{k}}\underset{%
j\neq k}{\prod\limits_{j=3}^{n+2}} D_{v_j}f_{k-1}(\mathbf{0}) =D_{v_{k}}
\underset{j\neq k}{\prod\limits_{j=3}^{n+2}}D_{v_j}f_{k}(\mathbf{0})
=\prod\limits_{j=3}^{n+2}D_{v_j}f_{k}(\mathbf{0})\text{.}  \notag
\end{align}
Thus, we obtain the following chain of equalities  
\begin{align}  \label{second}
\prod\limits_{j=3}^{n+2}D_{v_j}f_{2}(\mathbf{0})
=&\prod\limits_{j=3}^{n+2}D_{v_j}f_{3}(\mathbf{0})=\dots=\prod
\limits_{j=3}^{n+2}D_{v_j}f_{n+2}(\mathbf{0})
=\prod\limits_{j=3}^{n+2}D_{v_j}f_{1}(\mathbf{0})\text{.}
\end{align}
The last equality follows from Lemma~\ref{lemma} since $f_{1}$ and $f_{n+2}$
share a common edge $v_{1}$.  Thus  
\begin{align}
&D_{v_{1}}^{n} f_{2}(\mathbf{0})\overset{\text{by (\ref{nderiv})}}{=}
D_{v_{2}}p(D_{v_{2}},\dots , D_{v_{n+2}})f_2(\mathbf{0})+\gamma
\prod\limits_{j=3}^{n+2}D_{v_j}f_2(\mathbf{0})  \notag \\
& \overset{\text{by (\ref{first},\ref{second})}}{=}D_{v_{2}}p(D_{v_{2}},
\dots , D_{v_{n+2}})f_1(\mathbf{0})+\gamma
\prod\limits_{j=3}^{n+2}D_{v_j}f_1(\mathbf{0}) \overset{\text{by (\ref
{nderiv})}}{=}D_{v_{1}}^{n} f_{1}(\mathbf{0}).  \notag
\end{align}
which completes the proof of~(\ref{deriv}), and consequently proves the
theorem. 
\end{proof}

The next result  is a direct consequence of  applying Theorem~\ref{main} 
to the derivatives of the piecewise function.
\begin{corollary}
\label{supmain} Let functions $f_{1},\ldots ,f_{n+2},$ be $m$ times
continuously differentiable on $\Omega $, with $m\geq n$, and let $F_{n+2}$
be defined piecewise on each sector $\Delta _{j}$ by $F_{n+2}\mid _{\Delta
_{j}}:=f_{j}$, $j=1,\dots ,n+2$. If $F_{n+2}\in C^{m}(\Omega )$ then $F_{n+2}$ 
has all derivatives of order $m+1$ at the origin, that is, $F_{n+2}\in
C^{m+1}(\mathbf{0})$, $m\geq n\geq 0$. 
\end{corollary}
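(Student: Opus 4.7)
The plan is to reduce to Theorem~\ref{main} by first differentiating $F_{n+2}$ exactly $m-n$ times and then invoking the theorem on the resulting piecewise function. Concretely, for any multi-index $\alpha$ with $|\alpha|=m-n$, I would set $G_{\alpha}:=D^{\alpha}F_{n+2}$ and view it as a new piecewise function on the same partition into sectors $\Delta_{1},\ldots,\Delta_{n+2}$, with piece $G_{\alpha}\mid_{\Delta_{j}}=D^{\alpha}f_{j}$.

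The key verification is that $G_{\alpha}$ meets the hypotheses of Theorem~\ref{main}. This is straightforward bookkeeping: each $f_{j}\in C^{m}(\Omega)$ gives $D^{\alpha}f_{j}\in C^{n}(\Omega)$, so each piece is $n$-times continuously differentiable on $\Omega$; and $F_{n+2}\in C^{m}(\Omega)$ forces $G_{\alpha}\in C^{n}(\Omega)$ globally, since any partial derivative of order $n$ of $G_{\alpha}$ is a partial derivative of order $m$ of $F_{n+2}$. Applying Theorem~\ref{main} to $G_{\alpha}$ then yields $G_{\alpha}\in C^{n+1}(\mathbf{0})$, which translates to the existence of every mixed partial $D^{\beta}G_{\alpha}(\mathbf{0})=D^{\alpha+\beta}F_{n+2}(\mathbf{0})$ with $|\beta|=n+1$.

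Finally, I would let $\alpha$ range over all multi-indices of order $m-n$; together with $\beta$ of order $n+1$, this sweeps out every partial derivative of $F_{n+2}$ of order $m+1$ at the origin, so $F_{n+2}\in C^{m+1}(\mathbf{0})$. There is no real obstacle: the partition and the non-collinearity of the rays are the same after differentiation, so Theorem~\ref{main} applies verbatim to each $G_{\alpha}$, and the only point needing a line of explanation is the equivalence between the $C^{m}$-smoothness of $F_{n+2}$ and the $C^{n}$-smoothness of each $G_{\alpha}$.
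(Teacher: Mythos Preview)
Your proposal is correct and follows exactly the approach the paper indicates: the corollary is stated as ``a direct consequence of applying Theorem~\ref{main} to the derivatives of the piecewise function,'' and your reduction via $G_{\alpha}=D^{\alpha}F_{n+2}$ with $|\alpha|=m-n$ is precisely that. Your bookkeeping (that each $D^{\alpha}f_{j}\in C^{n}(\Omega)$ and $G_{\alpha}\in C^{n}(\Omega)$, and that $\alpha+\beta$ sweeps out all order-$(m+1)$ multi-indices) fills in the details the paper leaves implicit.
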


We finish this section and this article by constructing polynomials (hence
smooth functions) $f_{1},\ldots ,f_{n+2}$, $n\geq 1$,  such that the spline 
$F_{n+2}$ defined by $F_{n+2}\mid _{\Delta _{j}}=f_{j}$ is in $C^{n-1}(\Omega)$ 
yet $F_{n+2}\not\in C^{n}(\mathbf{0)}$.  We note that if $n=0$, it is
immediately obvious that $f_1\equiv 0$ and $f_2\equiv 1$ do not join
continuously at the origin.  The following observation is the key to the
construction:

\begin{lemma}
\label{simple} Given $n\geq 1$, consider the polynomial 
\begin{equation*}
g(x,y):=\sum\limits_{i=1}^{n+1}c_{i}(y+a_{i}x)^{n}\text{.}
\end{equation*}
Then the system of equations with the unknowns $(c_1,\dots,c_{n+1})$: 
\begin{equation*}
\frac{\partial ^{k}}{\partial x^{j}\partial y^{k-j}}~g(x,0)=0,~~~\hbox{for
all}~~0\leq j\leq k \leq n-1,
\end{equation*}
has a non-trivial solution.
\end{lemma}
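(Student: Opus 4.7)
The plan is to compute the partial derivatives in the system explicitly, restrict them to the line $y=0$, and observe that the apparent $\binom{n+1}{2}$ conditions collapse to only $n$ distinct linear equations in the $n+1$ unknowns $c_1,\ldots,c_{n+1}$. Any homogeneous linear system with more unknowns than equations admits a non-trivial solution, so this is all that is required.

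First I would differentiate the building block $(y+a_i x)^n$, obtaining
\[
\frac{\partial^k}{\partial x^j\partial y^{k-j}}(y+a_ix)^n \;=\; \frac{n!}{(n-k)!}\,a_i^{\,j}\,(y+a_ix)^{n-k},
\]
then sum over $i$ and evaluate at $y=0$ to get
\[
\frac{\partial^k}{\partial x^j\partial y^{k-j}}g(x,0) \;=\; \frac{n!}{(n-k)!}\,x^{n-k}\,\sum_{i=1}^{n+1} c_i\,a_i^{\,n-k+j}.
\]
Since $k\le n-1$, the factor $x^{n-k}$ is a nonzero monomial, so the vanishing of this expression for all $x$ is equivalent to the single scalar condition $\sum_i c_i\,a_i^{\,n-k+j}=0$.

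Next I would track the exponent $m:=n-k+j$ as $(j,k)$ ranges over $0\le j\le k\le n-1$. For each fixed $k$, letting $j$ run from $0$ to $k$ produces the block of exponents $\{n-k,n-k+1,\ldots,n\}$, and taking the union over $k\in\{0,\ldots,n-1\}$ yields exactly $\{1,2,\ldots,n\}$. Thus many pairs $(j,k)$ give the same equation, and the whole system reduces to the $n$ conditions
\[
\sum_{i=1}^{n+1} c_i\, a_i^{\,m} \;=\; 0, \qquad m=1,2,\ldots,n.
\]

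There is no real obstacle: this is a homogeneous linear system of $n$ equations in $n+1$ unknowns, so its solution space has dimension at least one, yielding a non-trivial $(c_1,\ldots,c_{n+1})$. The only point that needs a careful look is the bookkeeping step confirming that the exponents $n-k+j$ under the constraint $0\le j\le k\le n-1$ are bounded between $1$ and $n$ (so that no ``extra'' equation appears which might push the rank to $n+1$); this is immediate from $n-k\ge 1$ and $j\le k$.
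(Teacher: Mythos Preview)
Your argument is correct and follows essentially the same route as the paper: compute the mixed partials of $(y+a_ix)^n$, restrict to $y=0$, and observe that all conditions reduce to a homogeneous linear system with fewer equations than unknowns. Your bookkeeping on the exponent $m=n-k+j$ is in fact sharper than the paper's, which records the range as $s=0,\dots,n-1$ whereas your analysis (correctly) shows the attained exponents are $1,\dots,n$; either way one gets $n$ equations in $n+1$ unknowns, so the conclusion is unaffected.
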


\begin{proof}
Indeed for $0\leq k \leq n-1$ and $0\leq j\leq k$ we have 
\begin{align}
\sum\limits_{i=1}^{n+1}c_i{\frac{\partial ^{k}}{\partial x^{j}\partial
y^{k-j}}(y+a_{i}x)^{n}}\Big|_{y=0}&=\frac{n!}{(n-k)!} \sum%
\limits_{i=1}^{n+1}c_{i}a_{i}^{j}(y+a_{i}x)^{n-k}\Big|_{y=0}  \notag \\
&=\frac{n!}{(n-k)!}x^{n-k} \sum\limits_{i=1}^{n+1}c_{i}a_{i}^{n-k+j}=0. 
\notag
\end{align}
With $s:=n-k+j$, the system of $n$ equations with $n+1$ unknowns 
\begin{equation*}
\sum\limits_{i=1}^{n+1}c_{i}a_{i}^{s}=0,~~~s=0,\dots ,n-1,
\end{equation*}
has a non-trivial solution.
\end{proof}

Now we can proceed with our construction. As in Figure~\ref{3.2}, choose $n+2
$ consecutive positioned clockwise rays $v_i$ emanating from the origin
whose equations are given by the following lines $l_i$ 
\begin{equation*}
l_{1}:~y=0,~~l_{2}:~y+a_{2}x=0,~~\dots~~,~~l_{n+2}~:y+a_{n+2}x=0.
\end{equation*}
Note that without loss of generality we assume that $v_{1}$ goes along the
positive direction of the $x$-axis. Define $f_{1}:\equiv 0$ to be the
function between $v_{1}$ and $v_{2}$. Let the function between $v_{k}$ and 
$v_{k+1}$ be defined as follows:  
\begin{equation*}
f_{k}:=\sum\limits_{i=2}^{k}c_{i}l_{i}^{n},~~~\hbox{for each}~~2\leq k \leq
n+2,
\end{equation*}
with the convention $v_{n+3}:=v_1$. We next define:
\begin{equation*}
g_k(x,y):=f_{k+1}(x,y)-f_{k}(x,y)=c_{k+1}(y+a_{k+1}x)^{n},~~~\hbox{for all}
~~2\leq k \leq n+1.
\end{equation*}
All partial derivatives of $g_k$ of order $n-1$ or less vanish for 
$y=-a_{k+1}x$, that is, at the line $l_{k+1}$. It remains to choose the
coefficients $c_2, \dots,c_{n+2}$ in such a way that $f_{n+2}$ is glued
smoothly to $f_{1}\equiv 0$ at $l_{1}$, that is, so that all derivatives of
order $n-1$ or less of the polynomial 
\begin{equation*}
f_{n+2}=\sum\limits_{i=2}^{n+2}c_{i}l_{i}^{n}
\end{equation*}
vanish at $y=0$. By Lemma~\ref{simple} this leads to a system of $n$
equations with $n+1$ unknowns $(c_2,\dots,c_{n+2})$ that has a nontrivial
solution.

Hence there exists a non-zero homogeneous polynomial $f_{n+2}$ of order $n$
between $l_{n+2}$ and $l_{1}$ which is $C^{n-1}$-smoothly glued to $f_{n+1}$
across $l_{n+1}$ and $C^{n-1}$-smoothly glued to $f_{1}\equiv 0$ across 
$l_{1}$. Finally $f_{n+2}$ is a nonzero homogeneous polynomial of order $n$.
Thus there exists a partial derivative of $f_{n+2}$ of order $n$ which is a
non-zero constant. In particular, its value at the origin is not zero, yet
the same derivative of $f_{1}\equiv 0$ is zero. The resulting piecewise
function $F_{n+2}$ does not have a derivative of order $n$ at the origin.

\begin{remark}
 The existence of the spline $F_{n+2}$ implicitly constructed
above  also follows   from Theorem 9.3 in~\cite{LS}. Indeed this theorem
shows that the dimension of polynomial splines of degree $n$ and smoothness~$n-1$ 
defined over the union of $n+2$ sectors is strictly greater than $\binom{n+2}{2}$. 
The latter is the dimension of bivariate polynomials. Thus,
there exists a spline that does not have a derivative of order $n$ at the
origin. We decided to provide a development here that would be 
accessible to an audience not familiar with spline theory.
\end{remark}

\end{document}